\newtheorem{theorem}{Theorem}
\newenvironment{proof}[1][Proof]{\noindent\textbf{#1.} }{\ \rule{0.5em}{0.5em}}
\begin{document}

\title{A Note On The Spectral Norms of The Matrices Connected Integer
Numbers Sequence}
\author{Durmu\c{s} Bozkurt \and Department of Mathematics, Science Faculty
of Sel\c{c}uk University}
\maketitle

\begin{abstract}
In this paper, we compute the spectral norms of the matrices related with
integer squences and we give two examples related with Fibonacci and Lucas
numbers.
\end{abstract}

\bigskip Keywords: Integer numbers squence, spectral norm, principal minor.

AMS Classification Number: 15A60, 15F35, 15B36, 15B57

\section{Introduction}

\bigskip In [1], the upper and lower bounds for the spectral norms of $r-$%
circulant\linebreak matrices are obtained by Shen and Cen. The lower bounds
for the norms of Cauchy-Toeplitz and Cauchy-Hankel matrices are given by Wu
in [2]. In [3-5], Solak and Bozkurt have found some bounds for the norms of
Cauchy-Toeplitz, Cauchy-Hankel and circulant matrices. In [5], Barrett and
Feinsilver have\linebreak defined the principal 2- minors and have
formulated the inverse of tridiagonal matrix.

Let $A$ be any $n\times n$ complex matrix. The well known spectral norm of
the matrix $A$ is%
\begin{equation*}
\left\Vert A\right\Vert _{2}=\sqrt{\max_{1\leq i\leq n}\left\vert \lambda
_{i}(A^{H}A)\right\vert }
\end{equation*}%
where $\lambda _{i}(A^{H}A)$ is eigenvalue of $A^{H}A$\ and $A^{H}$ is
conjugate transpose of the matrix $A$. $k$-principal minor of the matrix $A$
is denoted byare 
\begin{equation}
A\left( 
\begin{array}{c}
i_{1}i_{2}\ldots i_{k} \\ 
i_{1}i_{2}\ldots i_{k}%
\end{array}%
\right) =\left\vert 
\begin{array}{cccc}
a_{i_{1},i_{1}} & a_{i_{1},i_{2}} & \ldots & a_{i_{1},i_{k}} \\ 
a_{i_{2},i_{1}} & a_{i_{2},i_{2}} & \ldots & a_{i_{2},i_{k}} \\ 
\vdots & \vdots & \ddots & \vdots \\ 
a_{i_{k},i_{1}} & a_{i_{k},i_{2}} & \ldots & a_{i_{k},i_{k}}%
\end{array}%
\right\vert  \label{1}
\end{equation}%
where $1\leq i_{1}<i_{2}<\ldots <i_{k}\leq n$ $(1\leq k\leq n)[5].$

Now we define our matrix. $x_{i}$s are any integer numbers squence
for\linebreak $i=1,2,\ldots ,n.$ Let matrix $A_{x}$ be following form: 
\begin{equation}
A_{x}=[a_{ij}]_{i,j=1}^{n}=[x_{i}-x_{j}]_{i,j=1}^{n}  \label{2}
\end{equation}%
Obviously, $A_{x}$\ is skew-symmetric matrix. i.e. $A_{x}^{T}=-A_{x}.$ Since
eigenvalues of the skew-hermitian matrix $A_{x}$\ are pure imaginary,
eigenvalues of the matrix $iA_{x}$ are real where $i$ is complex unity.

\section{Main Result}

\begin{theorem}
Let the matrices $A_{x}$ be as in (\ref{1}). Then%
\begin{equation}
\left\Vert A_{x}\right\Vert _{2}=\dsum\limits_{1\leq r<s\leq
n}(x_{r}-x_{s})^{2}.  \label{3}
\end{equation}%
where $n\geq 4.$
\end{theorem}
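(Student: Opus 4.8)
The plan is to exploit the low rank of $A_x$ together with its skew-symmetry. First I would write $A_x$ as a difference of two rank-one terms: setting $\mathbf{x}=(x_1,\dots,x_n)^T$ and $\mathbf{e}=(1,\dots,1)^T$, one has $A_x=\mathbf{x}\mathbf{e}^T-\mathbf{e}\mathbf{x}^T$, since the $(i,j)$ entry of the right-hand side is $x_i-x_j$. Consequently $\operatorname{rank}(A_x)\le 2$, and the whole spectral analysis reduces to the action of $A_x$ on the two-dimensional subspace $\operatorname{span}\{\mathbf{x},\mathbf{e}\}$, its kernel being the orthogonal complement of this span.

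Next I would use skew-symmetry to pin down the singular values. Because $A_x$ is real and skew-symmetric, $A_x^H A_x=A_x^T A_x=-A_x^2$, whose eigenvalues are the squares of the singular values of $A_x$. The eigenvalues of a real skew-symmetric matrix are purely imaginary and occur in conjugate pairs $\pm i\mu$, as already noted in the introduction, so the nonzero singular values occur in equal pairs. Combined with $\operatorname{rank}(A_x)\le 2$, this forces $A_x^T A_x$ to have a single positive eigenvalue $\mu^2$ of multiplicity two and $0$ otherwise; hence $\left\Vert A_x\right\Vert_2=\mu$.

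It then remains only to evaluate $\mu^2$, and here the trace does the work. On one hand $\operatorname{tr}(A_x^T A_x)$ equals the sum of the eigenvalues of $A_x^T A_x$, namely $2\mu^2$. On the other hand it is the squared Frobenius norm $\operatorname{tr}(A_x^T A_x)=\sum_{i,j}(x_i-x_j)^2=2\sum_{1\le r<s\le n}(x_r-x_s)^2$. Equating the two yields $\mu^2=\sum_{1\le r<s\le n}(x_r-x_s)^2$, so that $\left\Vert A_x\right\Vert_2=\sqrt{\sum_{1\le r<s\le n}(x_r-x_s)^2}$, equivalently $\left\Vert A_x\right\Vert_2^2=\sum_{1\le r<s\le n}(x_r-x_s)^2$, which is the asserted identity read as a statement about the squared norm. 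As an independent check, expressing $A_x$ in the basis $\{\mathbf{x},\mathbf{e}\}$ gives the $2\times 2$ matrix $\left(\begin{smallmatrix} a & n\\ -b & -a\end{smallmatrix}\right)$ with $a=\sum_i x_i$ and $b=\sum_i x_i^2$, whose eigenvalues are $\pm\sqrt{a^2-bn}$; Lagrange's identity $n\sum_i x_i^2-(\sum_i x_i)^2=\sum_{r<s}(x_r-x_s)^2$ recovers the same $\mu^2$.

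The step most in need of care is the claim that the two nonzero singular values coincide, since this is exactly what allows the single scalar $\operatorname{tr}(A_x^T A_x)$ to determine $\mu$ precisely rather than merely bound it. I would justify it either through the conjugate-pairing of the purely imaginary eigenvalues of $A_x$, or more concretely by diagonalizing the $2\times 2$ restriction above and verifying directly that both singular values equal $\sqrt{bn-a^2}$. I would also flag that the argument is valid already for $n\ge 2$ whenever the $x_i$ are not all equal (and trivially when they are, since then $A_x=0$), so the hypothesis $n\ge 4$ is not actually needed for the identity itself.
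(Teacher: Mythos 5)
Your proof is correct, and it takes a genuinely different route from the paper's. The paper establishes $\operatorname{rank}(A_x)=2$ by row operations, passes to the Hermitian matrix $iA_x$, and computes its characteristic polynomial via principal minors: the trace coefficient vanishes and the sum of principal $2$-minors equals $-\sum_{1\leq r<s\leq n}(x_r-x_s)^2$, giving $\Delta_{iA_x}(\lambda)=\lambda^{n}-\bigl(\sum_{1\leq r<s\leq n}(x_r-x_s)^2\bigr)\lambda^{n-2}$. You instead write $A_x=\mathbf{x}\mathbf{e}^{T}-\mathbf{e}\mathbf{x}^{T}$, use the $\pm i\mu$ pairing of the eigenvalues of a real skew-symmetric matrix to conclude that $A_x^{T}A_x$ has the single nonzero eigenvalue $\mu^{2}$ with multiplicity two, and extract $\mu^{2}$ from the Frobenius trace $\operatorname{tr}(A_x^{T}A_x)=2\sum_{1\leq r<s\leq n}(x_r-x_s)^2$. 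The two arguments share the same skeleton (rank two, plus the eigenvalue pairing forced by skew-symmetry, plus one scalar identity to pin down $\mu^2$), but yours replaces the principal-minor bookkeeping with a one-line trace computation, handles the degenerate case $x_1=\dots=x_n$ explicitly, and, as you note, needs only $n\geq 2$ rather than $n\geq 4$.

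One further point, and it matters: your computation shows that the theorem as printed is off by a square root. Under the paper's own definition $\Vert A\Vert_2=\sqrt{\max_i\vert\lambda_i(A^{H}A)\vert}$, the nonzero eigenvalues of $iA_x$ are $\pm\sqrt{\sum_{1\leq r<s\leq n}(x_r-x_s)^2}$ (this is exactly what the paper's characteristic polynomial says), so the spectral norm is $\sqrt{\sum_{1\leq r<s\leq n}(x_r-x_s)^2}$, which is the value you obtained. The paper's final line, $\Vert iA_x\Vert_2=\Vert A_x\Vert_2=\sum_{1\leq r<s\leq n}(x_r-x_s)^2$, silently drops the square root; for integer sequences with $n\geq 4$ the stated equality can hold only in the trivial case where all $x_i$ coincide. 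Your decision to read the claim as a formula for $\Vert A_x\Vert_2^{2}$ is therefore not a cosmetic caveat but precisely the correction the statement needs, and the same correction propagates to the Fibonacci and Lucas formulas in Section 3.
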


\begin{proof}
If we substract $(i-1)$th row from $i$th row of the matrix $A_{x}$
for\linebreak $i=n,n-1,\ldots ,2,$ then we obtain%
\begin{equation*}
B_{x}=\left[ 
\begin{array}{ccccc}
0 & x_{1}-x_{2} & x_{1}-x_{3} & \ldots & x_{1}-x_{n} \\ 
x_{2}-x_{1} & x_{2}-x_{1} & x_{2}-x_{1} & \ldots & x_{2}-x_{1} \\ 
x_{3}-x_{2} & x_{3}-x_{2} & x_{3}-x_{2} & \ldots & x_{3}-x_{2} \\ 
\vdots & \vdots & \vdots & \ddots & \vdots \\ 
x_{n-1}-x_{n-2} & x_{n-1}-x_{n-2} & x_{n-1}-x_{n-2} & \ldots & 
x_{n-1}-x_{n-2} \\ 
x_{n}-x_{n-1} & x_{n}-x_{n-1} & x_{n}-x_{n-1} & \ldots & x_{n}-x_{n-1}%
\end{array}%
\right] .
\end{equation*}%
Obviously, $rank(B_{x})=rank(A_{x})=2.$ Since the matrix $A_{x}$ is
skew-symmetric, the matrix $iA_{x}$ is symmetric where $i$ is complex unity.
Then all the eigenvalues of the matrix $iA_{x}$ are real numbers. Moreover, $%
rank(A_{x})=rank(iA_{x}).$ Since determinants of all $k$-square submatrices
of the matrix $iA_{x}$ are zero for $k\geq 3$, all principal $k$-minors of
the matrix $iA_{x}$ are zero for $k\geq 3.$ Then characteristic polynomial
of the matrix $iA_{x}$%
\begin{equation*}
\Delta _{iA_{x}}(\lambda )=\lambda ^{n}+a_{1}\lambda ^{n-1}+a_{2}\lambda
^{n-2}
\end{equation*}%
where $(-1)^{k}a_{k}$ is the sum of principal $k$-minors of the matrix $%
iA_{x}$ where $k=1,2.$ On the other hand since $rank(iA_{x})=2,$ two
eigenvalues of the matrix $iA_{x}$ are nonzero. If $i\lambda $ is the
eigenvalue of the matrix $A_{x},$ then $-i\lambda $ is an eigenvalue of $%
A_{x}.$ Then $a_{1}=-tr(A_{x})=-tr(iA_{x})=\tsum\nolimits_{k=1}^{n}\lambda
_{k}=0$ where $\lambda _{k}$ are the eigenvalues of the matrix $iA_{x}.$
Cofficient $a_{2}$ is the sum of principal $2$-minors of any square matrix $%
A_{x}.$ i.e.%
\begin{equation*}
a_{2}=\dsum\limits_{1\leq r<s\leq n}A\left( 
\begin{array}{c}
r\ \ s \\ 
r\ \ s%
\end{array}%
\right) .
\end{equation*}%
Then we have%
\begin{eqnarray*}
a_{2} &=&\dsum\limits_{1\leq r<s\leq n}iA_{x}\left( 
\begin{array}{c}
r\ \ \ s \\ 
r\ \ s%
\end{array}%
\right) =\dsum\limits_{1\leq r<s\leq n}\left\vert 
\begin{array}{rr}
i(x_{r}-x_{r}) & i(x_{r}-x_{s}) \\ 
i(x_{s}-x_{r}) & i(x_{s}-x_{s})%
\end{array}%
\right\vert \\
&=&\dsum\limits_{1\leq r<s\leq n}\left\vert 
\begin{array}{cc}
0 & i(x_{r}-x_{s}) \\ 
-i(x_{r}-x_{s}) & 0%
\end{array}%
\right\vert =-\dsum\limits_{1\leq r<s\leq n}(x_{r}-x_{s})^{2}.
\end{eqnarray*}%
Hence we obtain%
\begin{equation*}
\Delta _{iA_{x}}(\lambda )=\lambda ^{n}-\left( \dsum\limits_{1\leq r<s\leq
n}(x_{r}-x_{s})^{2}\right) \lambda ^{n-2}.
\end{equation*}%
Then%
\begin{equation*}
\left\Vert iA_{x}\right\Vert _{2}=\left\Vert A_{x}\right\Vert
_{2}=\dsum\limits_{1\leq r<s\leq n}(x_{r}-x_{s})^{2}.
\end{equation*}%
The proof is completed.\bigskip
\end{proof}

\section{\protect\bigskip Numerical Examples}

The well known $F_{n}$ is $n$th Fibonacci number with recurence
relation\linebreak $F_{n}=F_{n-1}+F_{n-2}$ initial condition $F_{0}=0$ and $%
F_{1}=1$ and $L_{n}$ is $n$th\linebreak Lucas number with recurence relation 
$L_{n}=L_{n-1}+L_{n-2}$ initial condition $L_{0}=2$ and $L_{1}=1.$ The
matrices $F$ and $L$ are following forms:%
\begin{equation*}
F=[F_{i}-F_{j}]_{i,j=1}^{n}
\end{equation*}%
and%
\begin{equation*}
L=[L_{i}-L_{j}]_{i,j=1}^{n}.
\end{equation*}%
Let $\alpha =\dfrac{1+\sqrt{5}}{2}$ and $\beta =\dfrac{1-\sqrt{5}}{2}.$Then%
\begin{equation*}
F_{n}=\dfrac{\alpha ^{n}-\beta ^{n}}{\alpha -\beta }
\end{equation*}%
and%
\begin{equation*}
L_{n}=\alpha ^{n}+\beta ^{n}
\end{equation*}%
where $F_{n}$ and $L_{n}$ are $n$th Fibonacci and Lucas numbers,
respectively.\linebreak Furthermore%
\begin{equation*}
L_{n}=F_{n-1}+F_{n+1}.
\end{equation*}%
Now, we compute the spectral norms of the matrices $F$ and $L$. By the
definition of the spectral norm, we have

\begin{equation*}
\left\Vert F\right\Vert _{2}=\dsum\limits_{1\leq r<s\leq
n}(F_{r}-F_{s})^{2}=(n-1)F_{n+1}F_{n}-2\dsum\limits_{r=1}^{n-1}\dsum%
\limits_{s=r+1}^{n}F_{r}F_{s}.
\end{equation*}%
By the relationship between Fibonacci and Lucas numbers, we obtain%
\begin{equation*}
\left\Vert F\right\Vert _{2}=\left\{ 
\begin{array}{c}
(n-1)F_{n+1}F_{n}-\dfrac{2}{5}\left(
L_{n}-2+\dsum\limits_{r=1}^{n-1}\dsum\limits_{s=r+1}^{n}L_{r+s}\right) ,\ n\
is\ even \\ 
(n-1)F_{n+1}F_{n}-\dfrac{2}{5}\left(
L_{n}-1+\dsum\limits_{r=1}^{n-1}\dsum\limits_{s=r+1}^{n}L_{r+s}\right) ,\ n\
is\ odd%
\end{array}%
\right. .
\end{equation*}%
Similarly,%
\begin{equation*}
\left\Vert L\right\Vert _{2}=\dsum\limits_{1\leq r<s\leq
n}(L_{r}-L_{s})^{2}=(n-1)(L_{n+1}L_{n}-2)-2\dsum\limits_{r=1}^{n-1}\dsum%
\limits_{s=r+1}^{n}L_{r}L_{s}.
\end{equation*}%
Then%
\begin{equation*}
\left\Vert L\right\Vert _{2}=\left\{ 
\begin{array}{c}
(n-1)(L_{n+1}L_{n}-2)-2\left(
L_{n}-2+\dsum\limits_{r=1}^{n-1}\dsum\limits_{s=r+1}^{n}L_{r+s}\right) ,\ n\
is\ even \\ 
(n-1)(L_{n+1}L_{n}-2)-2\left(
L_{n}-1+\dsum\limits_{r=1}^{n-1}\dsum\limits_{s=r+1}^{n}L_{r+s}\right) ,\ n\
is\ odd%
\end{array}%
\right. .
\end{equation*}

\end{document}